%
\input ./style/arxiv-vmsta.cfg
\documentclass[numbers,compress,v1.0.1]{vmsta}
\usepackage{vtexbibtags}

\usepackage{authorquery}

\volume{5}
\issue{1}
\pubyear{2018}
\firstpage{113}
\lastpage{127}
\aid{VMSTA98}
\doi{10.15559/18-VMSTA98}
\articletype{research-article}



\numberwithin{equation}{section}
\startlocaldefs
\def\@journal@url{http://www.vmsta.org}
\def\@credit{%
  \vbox to 0pt{%
    \vskip-1.85pc
    \hskip-\textwidth
    \noindent
    \raise4mm\hbox to \textwidth{%
      \footnotesize
      \href{\@journal@url}{www.vmsta.org}%
      \hfill
      \href{\@vtex@url}{\includevtexlogo}%
      }%
    }%
  }
\newcommand{\rrVert}{\Vert}
\newcommand{\llVert}{\Vert}
\newcommand{\rrvert}{\vert}
\newcommand{\llvert}{\vert}
\urlstyle{rm}
\allowdisplaybreaks

\def\R{\mathbb{R}}
\def\E{\mathbb{E}}
\def\N{\mathbb{N}}
\def\Nb{\overline\N}

\newtheorem{thm}{Theorem}
\newtheorem{lm}[thm]{Lemma}

\theoremstyle{definition}
\newtheorem{defn}{Definition}

\theoremstyle{remark}
\newtheorem*{notation}{Notation}
\ifdefined\HCode 
\def\mytag{\qquad\qquad\qquad\qquad\qquad\qquad\mbox{(Lemma 3)}\nonumber}
\else 
\def\mytag{\tag{\small Lemma~\ref{Cheng}}}
\fi

\endlocaldefs

\begin{aqf}
\querytext{Q1}{Isn't it Lemma 3?}
\querytext{Q2}{Looks like $f\in$ was superfluous here.}
\end{aqf}
\begin{document}

\begin{frontmatter}
\pretitle{Research Article}

\title{On backward Kolmogorov equation related to CIR process}

\author{\inits{V.}\fnms{Vigirdas}~\snm{Mackevi\v cius}\thanksref{cor1}\ead[label=e1]{vigirdas.mackevicius@mif.vu.lt}}
\author{\inits{G.}\fnms{Gabriel\.e}~\snm{Mongirdait\.e}}
\thankstext[type=corresp,id=cor1]{Corresponding author.}
\address{Institute of Mathematics, Faculty of Mathematics and Informatics,\break \institution{Vilnius University},
Naugarduko 24, Vilnius,~\cny{Lithuania}}



\markboth{V. Mackevi\v cius, G. Mongirdait\.e}{On backward Kolmogorov equation related to CIR process}

\begin{abstract}
We consider the existence of a classical smooth solution to the backward
Kolmogorov equation
\begin{align*}
\begin{cases}
\partial_t u(t,x)=Au(t,x),& x\ge0,\ t\in[0,T],\\
u(0,x)=f(x),& x\ge0,
\end{cases} %
\end{align*}
where $A$ is the generator of the CIR process, the solution to the
stochastic differential equation
\begin{equation*}
X^x_t=x+\int_0^t\theta
\bigl(\kappa-X^x_s\bigr)\,ds+\sigma\int
_0^t\sqrt {X^x_s}
\,dB_s, \quad x\ge0,\ t\in[0,T],
\end{equation*}
that is, $Af(x)=\theta(\kappa-x)f'(x)+\frac{1}{2}\sigma^2xf''(x)$, $
x\ge0$ ($\theta,\kappa,\sigma>0$). Alfonsi \cite{Alfonsi} showed that
the equation has a smooth solution with partial derivatives of polynomial
growth, provided that the initial function $f$ is smooth with derivatives
of polynomial growth. His proof was mainly based on the analytical formula
for the transition density of the CIR process in the form of a~rather
complicated function series. In this paper, for a CIR process satisfying
the condition $\sigma^2\le4\theta\kappa$, we present a direct proof based
on the representation of a CIR process in terms of a~squared Bessel process
and its additivity property.
\end{abstract}
%

\received{\sday{20} \smonth{11} \syear{2017}}
\revised{\sday{31} \smonth{1} \syear{2018}}
\accepted{\sday{2} \smonth{2} \syear{2018}}
\publishedonline{\sday{6} \smonth{3} \syear{2018}}
\end{frontmatter}

\section{Introduction}
Let us recall the well-known relationship between the one-dimensional
stochastic differential equation
(SDE)
\begin{equation}
\label{eq:general SDE} X^x_t=x+\int_0^tb
\bigl(X^x_s\bigr)\,dt+\int_0^t
\sigma\bigl(X^x_s\bigr)\,dB_s,\quad
X^x_0=x,
\end{equation}
and the following parabolic partial differential equation (PDE), called
the backward Kolmogorov equation, with initial condition
\begin{align}
\begin{cases}\label{eq: Kolm. PDE}
\partial_tu(t,x)=Au(t,x),\\
u(0,x)=f(x),
\end{cases} %
\end{align}
where $Af=bf'+\frac{1}2\sigma^2f''$ is the generator of the diffusion
defined by SDE \eqref{eq:general SDE}. If the coefficients $b,\sigma:\R
\to\R$ and the initial function $f$ are sufficiently ``good,'' then
the function $u=u(t,x):=\mathbb{E}f(X^x_t)$ is a~(classical) solution
to PDE~\eqref{eq: Kolm. PDE}. From this by It\^o's formula it follows
that the random process
$$M^x_t:=u(T-t,X^x_t),\quad t\in[0,T],$$
is a martingale with mean $\E
\, M^x_t=f(x)$ satisfying the final condition $M^x_T=f(X^x_T)$. This
fact is essential in rigorous proofs of the convergence rates of weak
approximations of SDEs. The higher the convergence rate, the greater
smoothness of the coefficients, and the final condition is to be
assumed to get a sufficient smoothness of the solution~$u$ to~\eqref
{eq: Kolm. PDE}. The question of the existence of smooth classical
solutions to the backward Kolmogorov equation is more complicated than
it might seem from the first sight. General results typically require
smoothness and polynomial growth of several higher-order derivatives of
the coefficients; we refer to the book by Kloeden and Platen \cite
{Kloeden-Platen}, Theorem 4.8.6 on p.~153.

However, the coefficients of many SDEs used in financial mathematics
are not sufficiently good, and therefore the general theory is not
applicable. A classic example is the well-known Cox--Ingersoll--Ross
(CIR) process \cite{Cir}, the solution to the SDE
\begin{align}
\label{eq: CIR} X^x_t=x+\int_0^t
\theta\bigl(\kappa-X^x_s\bigr)\,ds+\int
_0^t\sigma\sqrt {X^x_s}
\,dB_s, \quad t\in[0,T],
\end{align}
with parameters $\theta, \kappa, \sigma>0$, $x\ge0$, where the
diffusion coefficient $\tilde\sigma(x)=\sigma\sqrt{x}$ has unbounded
derivatives.

Alfonsi \cite[Prop.~4.1]{Alfonsi}, using the known expression of the
transition density of CIR process by a~rather complicated function
series, gave an ad~hoc proof that, indeed, $u=u(t,x):=\mathbb
{E}f(X^x_t)$ is a classic solution to the PDE~\eqref{eq: Kolm. PDE},
where
$$
Af(x)=\theta(\kappa-x)f'(x)+\frac{1}2\sigma^2xf''(x),\quad x\ge0,
$$
is the generator of the CIR process~\eqref{eq: CIR}. Moreover, he
proved that if $f:\R_+\to\R$ is sufficiently smooth with partial
derivatives of polynomial growth, then so is the solution $u$.

In this paper, in case the coefficients of Eq.~\eqref{eq: CIR} satisfy
the condition $\sigma^2\le4\theta\kappa$, we give another proof of
this result, where we do not use the transition function. We believe
that our approach will be applicable to a wider class of
``square-root-type'' processes for which an explicit form of the
transition function is not known (e.g., the well-known square-root
stochastic-volatility Heston process \cite{Heston}). The main tools are
the additivity property of CIR processes and their representation in
terms of squared Bessel processes. More precisely, we use, after a
smooth time--space transformation, the expression of the solution
to Eq.~\eqref{eq: CIR} in the form $X^x_t=(\sqrt x+B_t)^2+Y_t$, where
$Y$ is a squared Bessel process independent from~$B$. The main
challenge is the negative powers of $x$ appearing in the expression
of $u(t,x)=\E f(X^x_t)$ after differentiation with respect to $x>0$.
To overcome it, we use a ``symmetrization'' trick (see Step~1 in the
proof of Theorem~\ref{pol1}) based on the simple fact that replacing $
B_t$ by the ``opposite'' Brownian motion $\bar B_t:=-B_t$ does not
change the distribution of~$X^x_t$.

Both proofs, Alfonsi's and ours, are ``probabilistic.'' It is
interesting whether there are similar results with ``nonprobabilistic''
proofs in the literature. Equation (\ref{eq: Kolm. PDE}) seems to be a very simple
equation, with coefficients analytic everywhere and the diffusion
nondegenerate everywhere except a single point. However, although there
is a vast literature on degenerate parabolic and elliptic
equations, we could find only a few related results, which, however, do
not include the case of initial functions $f$ from $C^n_{\mathit{pol}}(\mathbb
{R}_+)$ or $C^\infty_{\mathit{pol}}(\mathbb{R}_+)$ (see the notation in the
Introduction); instead, the boundedness of $f$ and its derivatives is
assumed as a rule. For example, general Theorem~1.1 of
Feehan and Pop~\cite{P.M.N. Feehan and C.A. Pop} (see also Cerrai~\cite
{Cerrai}) in our particular (one-dimensional) case gives an a~priori
estimate of the form
\[
\|u\|_{\mathcal{C}^{2+\alpha}([0,T]\times[0,\infty))}\le \xch{C\|f\| _{\mathcal{C}_p^{2+\alpha}([0,\infty))},}{C\|f\| _{\mathcal{C}_p^{2+\alpha}([0,\infty))}}
\]
in terms of the corresponding H\"older and weighted H\"older space
\textit{supremum} norms.
\def\R{\mathbb{R}}
\def\E{\mathbb{E}}
\def\ds{\,d s}
\def\dBs{\,d B_s}
\def\N{\mathbb{N}}

\section{Preliminaries}

\begin{defn}[\cite{CirBessel}, Def.~6.1.2.1]\label{def:BESQ}
For every $\delta{\geq} 0$ and $x {\geq} 0$, the unique strong
solution $Y$ to the equation
\begin{equation}
Y_t = x + \delta t + 2\int_0^t
\sqrt{Y_s}\dBs,\quad \xch{t {\geq} 0}{t {\geq} 0,}
\end{equation}
is called a squared Bessel process with dimension $\delta$, starting
at $x$ (BESQ$^\delta_x$ for short).
We further denote it by $Y^\delta_t(x)$ or $Y^\delta(t,x) $, and
also, $Y^\delta_t:=Y^\delta_t(0)$.
\end{defn}

\begin{lm}[See \cite{CirBessel}, Section~6.1]
\label{lm:BESQn}
Let $B=(B^1, B^2,\ldots,B^n)$ be a standard $n$-dimensional Brownian motion,
$n\in\mathbb{N}$. Then the process
\begin{align*}
R^2_t:= \llVert z+B_t \rrVert
^2=\sum_{i=1}^n
\bigl(z_i+B^i_t\bigr)^2,\quad t
\ge0,
\end{align*}
where $z=(z_1,\dots, z_n)\in\R^n$, coincides in distribution with $
Y^n_t(\|z\|), $ that is, with a BESQ$^n_x$ random process starting at
$x=\|z\|=\sqrt{\sum_{i=1}^nz^2_i}$. In particular,
\begin{equation}
\label{eq: } Y^n_t(x)\stackrel{d} {=}\bigl(\sqrt
x+B^1_t\bigr)^2+\sum
_{i=2}^n\bigl(B^i_t
\bigr)^2 \stackrel{d} {=}(\sqrt x+\xi\sqrt t)^2+Y^{n-1}_t
,\quad t\ge0,
\end{equation}
where $\xi$ is a standard normal variable independent of $Y^{n-1}_t$, and $\stackrel{d}{=}$ means equality in distribution.
\end{lm}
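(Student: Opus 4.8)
The first identity, $R_t^2 = \|z+B_t\|^2 \stackrel{d}{=} Y_t^n(\|z\|)$, I would prove by showing that the squared Euclidean norm of a Brownian motion started at $z \in \R^n$ satisfies the defining SDE of a $\mathrm{BESQ}^n$ process (Definition~\ref{def:BESQ}) with $\delta = n$ and starting point $x = \|z\|$. First I would apply the multidimensional It\^o formula to the function $\phi(w) = \|w\|^2 = \sum_{i=1}^n w_i^2$ evaluated along $w = z + B_t$. Since $\partial_i \phi = 2w_i$ and $\partial_i^2 \phi = 2$, and the components $B^i$ are independent standard Brownian motions (so $d\langle B^i, B^j\rangle_t = \delta_{ij}\,dt$), It\^o's formula gives

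\begin{equation*}
R_t^2 = \|z\|^2 + 2\sum_{i=1}^n \int_0^t \bigl(z_i + B_s^i\bigr)\,dB_s^i + n t .
\end{equation*}

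The key observation is that the martingale part $M_t := \sum_{i=1}^n \int_0^t (z_i + B_s^i)\,dB_s^i$ is a continuous local martingale whose quadratic variation is $\langle M\rangle_t = \sum_{i=1}^n \int_0^t (z_i + B_s^i)^2\,ds = \int_0^t R_s^2\,ds$. By the L\'evy/Dambis--Dubins--Schwarz representation of a continuous local martingale as a time-changed Brownian motion, there exists a standard Brownian motion $\beta$ with $M_t = \int_0^t \sqrt{R_s^2}\,d\beta_s$. Substituting, $R^2$ satisfies $R_t^2 = \|z\|^2 + n t + 2\int_0^t \sqrt{R_s^2}\,d\beta_s$, which is exactly the $\mathrm{BESQ}^n$ equation started at $\|z\|$. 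By the pathwise uniqueness asserted in Definition~\ref{def:BESQ}, uniqueness in law holds, so $R_t^2 \stackrel{d}{=} Y_t^n(\|z\|)$.

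For identity~\eqref{eq: }, I would invoke the first part with a convenient choice of starting point. Taking $z = (\sqrt{x}, 0, \dots, 0)$, so that $\|z\| = \sqrt{x}$, the first part yields $Y_t^n(x) \stackrel{d}{=} (\sqrt{x} + B_t^1)^2 + \sum_{i=2}^n (B_t^i)^2$. For the second equality, I would split off the single coordinate $B_t^1 \stackrel{d}{=} \xi\sqrt{t}$ (with $\xi$ standard normal) and recognize, again by the first part applied with $z = 0 \in \R^{n-1}$, that the remaining sum $\sum_{i=2}^n (B_t^i)^2 = \|B_t'\|^2$ (where $B' = (B^2,\dots,B^n)$) is distributed as $Y_t^{n-1}(0) = Y_t^{n-1}$. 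The independence of the two pieces follows from the independence of the coordinate $B^1$ from $(B^2,\dots,B^n)$.

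The main obstacle, and the only point requiring genuine care rather than routine computation, is the justification of the time-change step: one must check that $M$ is a true (or at least well-behaved local) martingale and that the time change $\langle M\rangle_t = \int_0^t R_s^2\,ds$ is strictly increasing and exhausts $[0,\infty)$ almost surely, so that the Dambis--Dubins--Schwarz theorem applies and the driving Brownian motion $\beta$ is genuinely standard. Since these facts are standard for squared Bessel processes and the lemma is explicitly cited to \cite{CirBessel}, Section~6.1, I would treat them as known and keep the argument brief.
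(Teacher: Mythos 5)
The paper never proves this lemma at all: it is imported directly from Jeanblanc--Yor--Chesney \cite{CirBessel}, Section~6.1, so there is no in-paper proof to compare against. Your argument is correct and is essentially the standard derivation behind the cited result. It\^o's formula applied to $\|z+B_t\|^2$ indeed gives drift $nt$ and local martingale part $M_t=\sum_{i=1}^n\int_0^t(z_i+B^i_s)\,dB^i_s$ with $\langle M\rangle_t=\int_0^t R_s^2\,ds$; since $R_s^2>0$ for Lebesgue-almost every $s$ almost surely (for $n=1$ the zero set of Brownian motion is Lebesgue-null, for $n\ge2$ zeros after time $0$ do not occur), $\langle M\rangle$ is continuous, strictly increasing, and $\langle M\rangle_\infty=\infty$, so the time-change representation applies without needing an enlargement of the probability space, producing a weak solution of the BESQ$^n$ equation; pathwise uniqueness (Yamada--Watanabe, implicit in the ``unique strong solution'' of Definition~\ref{def:BESQ}) then gives uniqueness in law, which is what the distributional identity requires. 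Your treatment of the second claim is also right, including the implicit point that the second equality in the displayed identity is a fixed-$t$ statement: as a process, $(\xi\sqrt t)_{t\ge0}$ is of course not a Brownian motion, and only one-dimensional marginals are used in the paper's main proof.

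One slip in wording, which you should fix even though it traces back to the lemma itself: your own It\^o computation exhibits the solution started at $\|z\|^2$, yet you then call it ``the BESQ$^n$ equation started at $\|z\|$.'' With the paper's Definition~\ref{def:BESQ} (where $Y_0=x$), the correct conclusion is $R^2\stackrel{d}{=}Y^n(\|z\|^2)$; the lemma's phrase ``starting at $x=\|z\|$'' is a typo, as the ``in particular'' display confirms, since there $z=(\sqrt x,0,\dots,0)$ gives $\|z\|=\sqrt x$ while the process $Y^n_t(x)$ starts at $x=\|z\|^2$. Your subsequent application of the first part is consistent with the correct reading, so nothing downstream of the slip breaks.
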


\begin{lm}[\cite{CirBessel}, Prop.~6.3.1.1]\label{lm:CIR-BESQ}
The distribution of CIR process \eqref{eq: CIR} can be expressed in
terms of a squared Bessel process as follows:
\begin{equation}
\label{eq: nCIR in terms of BESQ} X_t(x)\stackrel{d} {=}\mathrm{e}^{-\theta t}Y^\delta
\biggl(\frac{\sigma
^2}{4\theta}\bigl(\mathrm{e}^{\theta t}-1\bigr),x \biggr),\quad t
\ge0,
\end{equation}
where $\delta=4\theta\kappa/\sigma^2$.
\end{lm}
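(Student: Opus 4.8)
The plan is to transform the CIR equation~\eqref{eq: CIR} into the squared Bessel equation of Definition~\ref{def:BESQ} by a deterministic space scaling followed by a deterministic time change, and then invoke uniqueness for BESQ$^\delta$. Because both transformations are deterministic, evaluating the resulting identity of processes at the time corresponding to a fixed $t$ yields exactly the equality in distribution~\eqref{eq: nCIR in terms of BESQ}.

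First I would remove the mean-reversion drift. Setting $Z_t:=\mathrm{e}^{\theta t}X_t(x)$ and applying It\^o's formula, the two terms $\pm\theta\mathrm{e}^{\theta t}X_t\,dt$ cancel and, using $\sqrt{X_t}=\mathrm{e}^{-\theta t/2}\sqrt{Z_t}$, one is left with
\begin{equation*}
dZ_t=\theta\kappa\,\mathrm{e}^{\theta t}\,dt+\sigma\,\mathrm{e}^{\theta t/2}\sqrt{Z_t}\,dB_t,\qquad Z_0=x.
\end{equation*}
Next I introduce the deterministic time change $s=\tau(t):=\frac{\sigma^2}{4\theta}(\mathrm{e}^{\theta t}-1)$, with inverse $t=\phi(s)=\frac1\theta\ln(1+\frac{4\theta}{\sigma^2}s)$, and set $Y_s:=Z_{\phi(s)}$. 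Using $\mathrm{e}^{\theta\phi(s)}=1+\frac{4\theta}{\sigma^2}s$ and $\phi'(s)=\frac{4}{\sigma^2+4\theta s}$, a direct computation shows that the drift of $Y$ collapses to the constant $\theta\kappa\,\mathrm{e}^{\theta\phi(s)}\phi'(s)=\frac{4\theta\kappa}{\sigma^2}=\delta$, while the martingale part $N_t:=\int_0^t\sigma\,\mathrm{e}^{\theta r/2}\sqrt{Z_r}\,dB_r$, whose bracket is $\langle N\rangle_t=\int_0^t\sigma^2\mathrm{e}^{\theta r}Z_r\,dr$, acquires after the change the bracket $\langle N\rangle_{\phi(s)}=\int_0^s 4Y_u\,du$.

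It then remains to realize the time-changed martingale $\widetilde N_s:=N_{\phi(s)}$, a continuous local martingale with $\langle\widetilde N\rangle_s=4\int_0^sY_u\,du$, in the form $\widetilde N_s=2\int_0^s\sqrt{Y_u}\,dW_u$ for some Brownian motion $W$, which turns the identity for $Y$ into
\begin{equation*}
Y_s=x+\delta s+2\int_0^s\sqrt{Y_u}\,dW_u,
\end{equation*}
exactly the BESQ$^\delta_x$ equation of Definition~\ref{def:BESQ}; hence, by uniqueness, $Y\stackrel{d}{=}Y^\delta(\cdot,x)$ with $\delta=4\theta\kappa/\sigma^2$. Unwinding the scaling at $s=\tau(t)$, where $\phi(\tau(t))=t$, gives $X_t(x)=\mathrm{e}^{-\theta t}Y_{\tau(t)}\stackrel{d}{=}\mathrm{e}^{-\theta t}Y^\delta(\tau(t),x)$, which is~\eqref{eq: nCIR in terms of BESQ}. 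The only delicate step is the construction of the driving Brownian motion $W$: since $Y$ can reach $0$, the integrand $\sqrt{Y_u}$ degenerates there, so producing $W$ requires the martingale representation theorem on a possibly enlarged probability space, which is also why the identity is asserted in law rather than pathwise.
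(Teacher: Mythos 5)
Your argument is correct in all its computations: the scaling $Z_t=\mathrm{e}^{\theta t}X_t(x)$ does cancel the mean reversion, the clock $\tau(t)=\frac{\sigma^2}{4\theta}(\mathrm{e}^{\theta t}-1)$ does turn the drift rate into $\theta\kappa\,\mathrm{e}^{\theta\phi(s)}\phi'(s)=\delta$ and the bracket into $4\int_0^sY_u\,du$, and uniqueness in law for the BESQ equation (Yamada--Watanabe, consistent with the ``unique strong solution'' of Definition~\ref{def:BESQ}) closes the identification. Be aware, however, that the paper offers no proof of Lemma~\ref{lm:CIR-BESQ} at all: it is imported by citation from \cite{CirBessel}, Prop.~6.3.1.1, so you have reconstructed the standard derivation behind that reference rather than an alternative to an in-paper argument; as a bonus, your derivation gives the identity at the \emph{process} level, which is in fact what Step~1 of the proof of Theorem~\ref{pol1} implicitly relies on when it reduces estimate \eqref{eq: nel. d/dx Ef(Xt(x)} to $Y^\delta_t(x)$ via \eqref{eq: nCIR in terms of BESQ}. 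One genuine simplification you missed: the step you flag as delicate --- producing the driving Brownian motion $W$ via a representation theorem on an enlarged space --- is unnecessary here, precisely because the time change is deterministic. Define directly
\begin{equation*}
W_s:=\int_0^{\phi(s)}\frac{\sigma}{2}\,\mathrm{e}^{\theta r/2}\,dB_r;
\end{equation*}
then $\langle W\rangle_s=\frac{\sigma^2}{4\theta}(\mathrm{e}^{\theta\phi(s)}-1)=\tau(\phi(s))=s$, so $W$ is a Brownian motion by L\'evy's characterization, and the change-of-variables formula for stochastic integrals under deterministic time changes yields \emph{pathwise} $N_{\phi(s)}=2\int_0^s\sqrt{Y_u}\,dW_u$, since $2\sqrt{Y_{\tau(r)}}\cdot\frac{\sigma}{2}\mathrm{e}^{\theta r/2}=\sigma\,\mathrm{e}^{\theta r/2}\sqrt{Z_r}$ is exactly the integrand of $N$; the degeneracy of $\sqrt{Y}$ at zero never intervenes because $\sqrt{Y}$ is never inverted. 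Accordingly, your closing remark is slightly off: the identity $X_t(x)=\mathrm{e}^{-\theta t}Y_{\tau(t)}$ holds pathwise on the original space, and the lemma is phrased in law only because $Y^\delta(\cdot,x)$ there denotes an abstract BESQ copy rather than the one driven by this particular $W$.
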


We will frequently use differentiation under the integral sign (in
particular, under the expectation sign). Without special mentioning,
this will be clearly justified by \querymark{Q1}Lemma~\ref{Cheng},
which seems to be a folklore theorem;
we refer to technical report \cite{SteveCheng}.
\begin{defn}
\label{localintegrability}
Let $(E,\mathcal{A},\mu)$ be a measure space. Let $X\subset\R^k$ be
an open set, and $f: X\times E\rightarrow\mathbb{R}$ be a~measurable
function. The function $f$ is said to be locally integrable in $X$ if
\begin{align*}
\int\limits
_K\int\limits
_E\bigl\llvert f(x,\omega)\bigr\rrvert\mu(
d\omega)dx<\infty
\end{align*}
for all compact sets $K\subset X$.
\end{defn}

\begin{lm}[Differentiation under the integral sign; see
\cite{SteveCheng}, Thm.~4.1]
\label{Cheng}
Let $(E,\mathcal{A},\mu)$, $X$, and let~$f$ be as in
Definition~\ref{localintegrability}. Suppose that
$f$ has partial derivatives $\frac{\partial f}{\partial x_i}(x,\omega)$ for all
$(x,\omega)\in X\times E$ and that both $f$ and $\frac{\partial
f}{\partial x_i}$ are locally integrable in $X$. Then
\begin{align*}
\frac{\partial}{\partial x_i}\int\limits
_E f(x,\omega)\mu(d\omega) =\int\limits
_E
\frac{\partial}{\partial x_i}f(x,\omega)\mu(d\omega)
\end{align*}
for almost all $x\in X$.
In particular, if both sides are continuous in $X$, then we have
equality for all $x\in X$.
\end{lm}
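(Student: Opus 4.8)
The plan is to fix the coordinate direction $e_i$, reduce the statement to a one-dimensional fundamental-theorem-of-calculus identity along lines parallel to $e_i$, and then recover the almost-everywhere differentiation statement from the Lebesgue differentiation theorem. Write $F(x):=\int_E f(x,\omega)\,\mu(d\omega)$ and $g(x):=\int_E \frac{\partial f}{\partial x_i}(x,\omega)\,\mu(d\omega)$. Local integrability of $f$ and of $\frac{\partial f}{\partial x_i}$ (in the sense of Definition~\ref{localintegrability}) guarantees that $F$ and $g$ are finite and locally integrable for almost every $x\in X$, so both sides of the asserted identity make sense. It suffices to work on an arbitrary closed box $Q=I\times Q'$ compactly contained in $X$, where $I$ is the $x_i$-edge and $Q'$ collects the remaining coordinates.

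First I would establish, for each fixed $\omega$ and each fixed tuple $y\in Q'$ of the remaining coordinates, the one-variable identity
\[
 f(b,y,\omega)-f(a,y,\omega)=\int_a^b \tfrac{\partial f}{\partial x_i}(s,y,\omega)\,ds,\qquad a,b\in I.
\]
This is the crux and the main obstacle: the hypothesis only provides that $s\mapsto f(s,y,\omega)$ is differentiable at \emph{every} point, not that it is $C^1$ or known a priori to be absolutely continuous. The identity is nonetheless valid because an everywhere-differentiable function of one variable whose derivative is Lebesgue integrable necessarily satisfies the fundamental theorem of calculus; this classical fact rests on the Luzin $(N)$ property of everywhere-differentiable functions (a Vitali-covering argument), which forces absolute continuity once the derivative is integrable. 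The needed integrability of $s\mapsto\frac{\partial f}{\partial x_i}(s,y,\omega)$ on $I$ holds for almost every $(y,\omega)$ by local integrability together with Tonelli's theorem.

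Next I would integrate the displayed identity in $\omega$ and apply the Fubini--Tonelli theorem to interchange $\int_E$ with $\int_a^b$; this interchange is legitimate because $\int_a^b\int_E \lvert\frac{\partial f}{\partial x_i}(s,y,\omega)\rvert\,\mu(d\omega)\,ds<\infty$ by the local integrability assumption. The outcome is $F(b,y)-F(a,y)=\int_a^b g(s,y)\,ds$ for almost every $y\in Q'$ and all $a,b\in I$; that is, for almost every $y$ the map $s\mapsto F(s,y)$ is absolutely continuous on $I$ with density $s\mapsto g(s,y)$. The Lebesgue differentiation theorem, applied to the locally integrable function $g(\cdot,y)$, then yields $\frac{\partial}{\partial x_i}F(s,y)=g(s,y)$ for almost every $s\in I$, for almost every $y\in Q'$. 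Invoking Fubini once more, the exceptional set is Lebesgue-null in $Q$, and letting $Q$ exhaust $X$ gives $\frac{\partial}{\partial x_i}F=g$ almost everywhere in $X$, which is the claimed identity.

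Finally, the ``in particular'' clause is immediate: if both $\frac{\partial}{\partial x_i}F$ and $g$ are continuous on $X$, then two continuous functions agreeing almost everywhere (hence on a dense subset) agree everywhere, so equality holds at every $x\in X$.
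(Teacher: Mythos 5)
The paper contains no proof of this lemma at all: it is stated as a folklore result and quoted directly from Cheng's technical report \cite{SteveCheng} (Thm.~4.1), so the only comparison available is with that cited source, whose strategy your argument faithfully reproduces --- reduction to one dimension along lines parallel to $e_i$, the strong fundamental theorem of calculus for everywhere-differentiable functions with Lebesgue-integrable derivative (the Luzin~$(N)$/Vitali point you correctly single out as the crux, since mere everywhere-differentiability does not give absolute continuity for free), Fubini--Tonelli, and the Lebesgue differentiation theorem. Your proposal is correct; the one step you gloss over (splitting $\int_E[f(b,y,\omega)-f(a,y,\omega)]\,\mu(d\omega)$ into $F(b,y)-F(a,y)$ requires finiteness of each term separately, which local integrability gives only for a.e.\ $a$) is harmless, because integrability of $f(a,y,\cdot)$ at a single $a$ propagates to every $b\in I$ through the already-established pointwise identity.
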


\begin{notation}
As usual, $\N$ and $\R$ are the sets of natural and real numbers,
$\R_+:=[0,\infty)$, and $\Nb:=\N\cup\{0\}$. We denote by \querymark
{Q2}$C^n_{\mathit{pol}}(\mathbb{R}_+)$ the set of $n$ times continuously
differentiable functions $f:\R_+\to\R$ such that there exist
constants $C_i\ge0$ and $k_i\in\mathbb{N}$, $i=0,1,\ldots,n$,
such that
\begin{equation}
\label{eq: polinom. f isv. augimas iki n} \bigl\llvert f^{(i)}(x)\bigr\rrvert \le C_i
\bigl(1+x^{k_i}\bigr),\quad  x\ge0,\
\end{equation}
for all $i=0,1,\dots,n.$ Then, following Alfonsi~\cite{Alfonsi2010},
we say that the set of constants \{$(C_i,k_i)$, $i=0,1,\ldots,n$\}
is good for~$f$.
If $f\in C^\infty_{\mathit{pol}}(\mathbb{R}_+)$, that is, $f:\R_+\to\R$
is infinitely differentiable and there exist constants $C_i\ge0$ and
$k_i\in\mathbb{N}$, $i\in\Nb$, such that
\begin{equation}
\label{eq: polinom. f VISU isv. augimas} \bigl\llvert f^{(i)}(x)\bigr\rrvert \le C_i
\bigl(1+x^{k_i}\bigr),\quad  x\ge0,\ i\in\xch{\overline\N,}{\overline\N.}
\end{equation}
then
the sequence of constants \{$(C_i,k_i)$, $i\in\Nb $\} is said to
be good for $f$. Finally, by $C\ge0$ and $k\in\N$ we will denote
constants that depend only on the good set of a function $f$ and may
very from line to line.
\end{notation}

\section{Existence and properties of a solution to backward Kolmogorov
equation related to CIR process}
Our main result is a direct proof of the following:
\begin{thm}[cf.~Alfonsi~\cite{Alfonsi}, Prop.~4.1]
\label{pol1}
Let $X_t(x)=X^x_t$ be a CIR process with coefficients satisfying the
condition $\sigma^2\le4\theta\kappa$ and starting at $x\ge0$. Let
$f\in C^q_{\mathit{pol}}(\mathbb{R}_+)$ for some $q\ge4$. Then the function
\[
u(t,x):=\mathbb{E}f\bigl(X_t(x)\bigr),\quad x\ge0,\ t\in[0,T],
\]
is $l$ times continuously differentiable in $x\ge0$ and $l'$
times continuously differentiable in $t\in[0,T]$ for $l,l'\in\N$
such that $2l+4l' \le q$. Moreover, there exist constants $C\ge0$ and
$k\in\mathbb{N}$, depending only on a good set $\{(C_i,k_i)$, $
i=0,1,\ldots,q\}$ for $f$, such that
%
\begin{equation}
\label{eq: nel. d/dx Ef(Xt(x)}
\bigl\llvert \partial^j_x \partial^i_tu(t,x)\bigr\rrvert \leq C\bigl(1+x^k \bigr), \quad x\ge0,\ t\in[0,T],\
\end{equation}
for $j=0,1,\ldots,l,\ i=0,1,\ldots,l'$. In particular, $u(t,x)$ is
a \emph{(}classical\emph{)} solution to the Kolmogorov backward
equation \eqref{eq: Kolm. PDE} for $(t,x)\in[0,T]\times\R_+$.

As a consequence, if $f\in C^\infty_{\mathit{pol}}(\mathbb{R}_+)$, then $u(t,x)$
is infinitely differentiable on $[0,T]\times\R_+$,
and estimate~\eqref{eq: nel. d/dx Ef(Xt(x)} holds for all $i,j\in\N
_+$ with $C$ and $k$ depending on $(i,j)$ and a good sequence $
\{(C_i,k_i)$, $i\in\Nb \}$ for~$f$.
\end{thm}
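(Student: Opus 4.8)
The plan is to reduce $u$ to the explicit Bessel representation announced in the Introduction and then to confront the one genuine difficulty, namely the negative powers of $x$ created by differentiating $\sqrt x$.

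\textbf{Step 1 (representation and symmetrization).} I would first combine Lemma~\ref{lm:CIR-BESQ} with the additivity of squared Bessel processes, which splits off one dimension exactly as in the second equality of Lemma~\ref{lm:BESQn}, to write, with $s=s(t):=\frac{\sigma^2}{4\theta}(\mathrm e^{\theta t}-1)$ and $\delta=4\theta\kappa/\sigma^2$,
\begin{equation*}
u(t,x)=\E f\bigl(\mathrm e^{-\theta t}\bigl[(\sqrt x+\sqrt s\,\xi)^2+Y_s\bigr]\bigr),
\end{equation*}
where $\xi$ is standard normal and $Y=Y^{\delta-1}$ is an independent BESQ$^{\delta-1}_0$ process; the requirement $\delta-1\ge0$ is precisely the hypothesis $\sigma^2\le4\theta\kappa$. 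Using $\xi\stackrel{d}{=}-\xi$, I then symmetrize,
\begin{equation*}
u(t,x)=\tfrac12\,\E\bigl[f(\Phi_+)+f(\Phi_-)\bigr],\qquad\Phi_\pm:=\mathrm e^{-\theta t}\bigl[(\sqrt x\pm\sqrt s\,\xi)^2+Y_s\bigr].
\end{equation*}

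\textbf{Step 2 (smoothness in $x$; the main obstacle).} I would differentiate in $x$ under the expectation, which is legitimate by Lemma~\ref{Cheng}. A single term produces the singular factor $\partial_x(\sqrt x\pm\sqrt s\,\xi)^2=1\pm\sqrt s\,\xi/\sqrt x$; in the symmetrized sum the two $1/\sqrt x$ contributions enter only through $\frac{\sqrt s\,\xi}{\sqrt x}\bigl(f'(\Phi_+)-f'(\Phi_-)\bigr)$. Since $\Phi_+-\Phi_-=4\mathrm e^{-\theta t}\sqrt x\,\sqrt s\,\xi$, a mean-value (Taylor) estimate gives $f'(\Phi_+)-f'(\Phi_-)=O(\sqrt x)$, the $\sqrt x$ cancels, and a factor $f''$ is produced instead. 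This is the heart of the argument: each $x$-differentiation trades the would-be singularity for one further derivative of $f$, so $\partial_x^{\,l}u$ exists and extends continuously to $x=0$ as soon as $f\in C^{2l}$. The polynomial bound~\eqref{eq: nel. d/dx Ef(Xt(x)} for these derivatives then follows from the polynomial growth of the $f^{(i)}$ together with the finiteness of all moments of $\xi$ and of $Y_s$ and the boundedness of $s(t)$ and $\mathrm e^{-\theta t}$ on $[0,T]$. The genuine work is in the bookkeeping of the induction, tracking exactly which $f^{(i)}$ and which powers of $\xi$ and $\sqrt s$ survive after $l$ differentiations.

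\textbf{Step 3 (smoothness in $t$ and the equation).} With $C^{2,1}$-regularity and the polynomial bounds established, I would derive the equation from the Markov/semigroup identity $u(t+h,x)=\E\,u(t,X_h(x))$: applying It\^o's formula to $s\mapsto u(t,X_s(x))$, taking expectations, and letting $h\downarrow0$ gives $\partial_t u=Au$ with $Af=\theta(\kappa-x)f'+\frac12\sigma^2xf''$. Higher $t$-derivatives are then obtained by differentiating the equation, $\partial_t^{\,i}u=A^iu$, so one $t$-differentiation costs two $x$-differentiations; combined with $j$ additional $x$-derivatives this needs $f\in C^{4i+2j}$, which is ensured by $2l+4l'\le q$ for all $j\le l$ and $i\le l'$. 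Because $A$ has polynomial coefficients and hence preserves polynomial growth, \eqref{eq: nel. d/dx Ef(Xt(x)} propagates to every mixed derivative $\partial_x^{\,j}\partial_t^{\,i}u$, and the $C^\infty_{\mathit{pol}}$ statement follows by letting $q\to\infty$.
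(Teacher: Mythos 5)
Your proposal is correct and follows the paper's core strategy --- Bessel representation after the time--space change, the symmetrization $\xi\stackrel{d}{=}-\xi$, and the key mechanism of trading each would-be $1/\sqrt x$ singularity for one extra derivative of $f$ --- but it differs from the paper in one genuine and worthwhile way: the treatment of non-integer dimension $\delta=4\theta\kappa/\sigma^2$. You invoke the general additivity of squared Bessel processes to split off a single squared Gaussian, $Y^\delta_t(x)\stackrel{d}{=}(\sqrt x+\sqrt t\,\xi)^2+Y^{\delta-1}_t$ with $Y^{\delta-1}$ a BESQ$^{\delta-1}_0$ of arbitrary real dimension $\delta-1\ge0$, which handles all $\delta\ge1$ uniformly. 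The paper instead argues in two stages: first $\delta=n\in\N$ via the $n$-dimensional Brownian representation of Lemma~\ref{lm:BESQn}, then $n<\delta<n+1$ via the affine combination $\lambda_1\widetilde Y^n_t(x)+\lambda_2\widehat Y^{n+1}_t(x)$ of two independent integer-dimensional BESQ processes, which forces a second, messier symmetrization with two independent normals $\tilde\xi,\hat\xi$. Your route is shorter and matches the representation the paper itself advertises in its introduction; its only extra cost is that you need all moments of $Y^{\delta-1}_t$ for real $\delta$ (available from its gamma-type law), whereas the paper stays with sums of squared Gaussians whose moments are explicit. Two further points of comparison. First, your mean-value remark in Step~2 correctly explains the first derivative, but the heart of the matter at higher orders --- that after repeated $x$-differentiation the coefficients of the negative powers of $x$ \emph{vanish identically} (because $\int_{-1}^1 s\,ds=0$, $\int_{-1}^1\int_0^s u\,du\,s\,ds=0$, etc., in the iterated-integral representation) so that $\partial_x^{\,l}u$ extends continuously to $x=0$ --- is exactly the content of the paper's technical Lemma~\ref{lem: g(x,a,b)} and its appendix proof; your phrase ``bookkeeping of the induction'' names the right task but leaves unproved the one step where the paper invests real work. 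Second, in Step~3 the paper simply cites Alfonsi for the identity $\partial_t u=Au$ and then runs the induction on $l'$ through the operator identity, while you sketch a self-contained Markov/It\^o derivation; that is fine provided you verify the stochastic integral is a true martingale (which your polynomial bounds and the finiteness of CIR moments give), and your derivative count $f\in C^{2j+4i}_{\mathit{pol}}$, hence $2l+4l'\le q$, agrees with the theorem.
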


\begin{proof}
We first focus ourselves on the differentiability in $x\ge0$. By
Lemma~\ref{lm:CIR-BESQ} the process $X_t(x)$ can be reduced, by a
space--time transformation, to the BESQ$^\delta$ process $Y^\delta
_t(x)$ with $\delta=\frac{4\theta\kappa}{\sigma^2}\ge1$. Since
only bounded smooth functions of $t\in[0,T]$ are involved in \eqref
{eq: nCIR in terms of BESQ}, it suffices to show estimate~\eqref{eq:
nel. d/dx Ef(Xt(x)}
for $Y^\delta_t(x)$, $t\in[0,\tilde T],$ instead of $X_t(x)$, $
t\in[0,T]$, with $\tilde T=\frac{1}\theta\ln(1+\frac{4\theta T}{\sigma
^2})$. With an abuse of notation, we further write $T$ instead of $
\tilde T$. We proceed by induction on $l$.

\emph{Step 1}. Let $l=1$.
First, suppose that $\delta= n\in\N$. By Lemma~\ref{lm:BESQn} we
have
\begin{equation}
Y^n_t(x)\stackrel{d}=(\sqrt{x}+\xi\sqrt{t})^2+Y^{n-1}_t,
\end{equation}
where $\xi\sim\mathcal{N}(0,1)$ is independent of $Y^{n-1}_t$ (in
the case $n=1$, $Y^0_t:=0$).
Denote
\begin{align*}
Y^{+}_t(x)&:=(\sqrt{x}+\xi\sqrt{t})^2,\qquad
Y^{-}_t(x):=(\sqrt{x}-\xi \sqrt{t})^2.
\end{align*}

Since the distributions of $Y^{+}_t(x)$ and $Y^{-}_t(x)$ coincide,
we have
\begin{align}
&\partial_x\mathbb{E}f\bigl(Y^n_t(x)\bigr)
\notag
\\
&\quad= \partial_x\mathbb{E}f \bigl(Y^+_{t}(x)+Y^{n-1}_{t}
\bigr)
\nonumber
\\
&\quad=\frac{1}{2} \bigl[\partial_x\mathbb{E}f
\bigl(Y^+_{t}(x)+Y^{n-1}_{t} \bigr)+
\partial_x\mathbb{E}f \bigl(Y^-_{t}(x)+Y^{n-1}_{t}
\bigr) \bigr]
\nonumber
\\
&\quad=\frac{1}{2}\mathbb{E} \biggl[f' \bigl(Y^{+}_{t}(x)+Y^{n-1}_{t}
\bigr) \biggl(1+\xi\sqrt{\frac{t}{x}} \biggr)\mytag
\\
&\qquad +f' \bigl(Y^{-}_{t}(x)+Y^{n-1}_{t}
\bigr) \biggl(1-\xi\sqrt {\frac{t}{x}} \biggr) \biggr]
\nonumber
\\
&\quad=\mathbb{E}f' \bigl(Y^{n}_{t}(x)
\bigr)
\nonumber
\\
&\qquad +\frac{1}{2}\sqrt{\frac{t}{x}}\mathbb{E} \bigl\{\xi
\bigl[f' \bigl(Y^{+}_{t}(x)+Y^{n-1}_{t}
\bigr)-f' \bigl(Y^{-}_{t}(x)+Y^{n-1}_{t}
\bigr) \bigr] \bigr\}
\nonumber
\\
&\quad=\mathbb{E}f' \bigl(Y^{n}_{t}(x)
\bigr)+\frac{1}{2}\sqrt{t}\, \mathbb{E} \bigl(\xi g_1\bigl(x,
\xi\sqrt t, Y^{n-1}_{t}\bigr) \bigr)=:P(t,x)+R(t,x),\quad
x>0,\label{eq:P(t,x),R(t,x)}
\end{align}
where
\[
g_1(x,a,b):=\frac{f' ((\sqrt x+a)^2+b )-f' ((\sqrt x-a)^2+b
)}{\sqrt x},\quad x>0,\ a\in\R,\ b\ge0.
\]
We now estimate $P(t,x)$ and $R(t,x)$ separately.
By the well-known inequality
\begin{equation}
\label{eq: sum a_i} \Biggl\llvert \sum_{i=1}^na_i
\Biggr\rrvert ^p\le n^{p-1}\sum
_{i=1}^n\llvert a_i\rrvert^p
\quad \mbox{for any } n\in\N,\ p\ge1,\ a_i\in\R,\ i=1,2,\ldots,n,
\end{equation}
we have the following estimates:
\begin{align*}
\E\bigl(Y^{\pm}_t(x)\bigr)^p&=\E(\sqrt{x}\pm
\xi\sqrt{t})^{2p} \le2^{2p-1}\bigl(x^p+\E\llvert
\xi\rrvert^{2p}t^p\bigr)
\\
&=2^{2p-1} \biggl(x^p+\frac{2^p\varGamma(p+\frac{1}2)}{\sqrt\pi}t^p
\biggr)
\\
&\le C\bigl(1+x^p\bigr),\quad x\ge0,\ t\in[0,T],\\
\E\bigl(Y^{n}_t\bigr)^p &=\E \Biggl(\sum
_{i=1}^n \bigl\llvert B^i_t
\bigr\rrvert^{2} \Biggr)^p \le n^{p-1}\sum
_{i=1}^n\E \bigl\llvert
B^i_t\bigr\rrvert^{2p}
\\
&=n^{p}\frac{2^p\varGamma(p+\frac{1}2)}{\sqrt\pi}t^p \le C,\quad t\in[0,T],
\end{align*}
and, as a consequence,
\begin{align}
\E\bigl(Y^n_t(x)\bigr)^p&=\E
\bigl(Y^{+}_t(x)+Y^{n-1}_t
\bigr)^p \le2^{p-1}\E \bigl(\bigl(Y^{+}_t(x)
\bigr)^p+\E\bigl(Y^{n-1}_t\bigr)^p
\bigr)
\nonumber
\\
&\le C\bigl(1+x^p\bigr),\quad x\ge0,\ t\in[0,T].\label{eq:EY^p< C(1+x^p)}
\end{align}
Now, for $P(t,x)$, we have
\begin{align}
\bigl\llvert P(t,x)\bigr\rrvert &=\mathbb{E}\bigl\llvert f'
\bigl(Y^{n}_{t}(x) \bigr)\bigr\rrvert \leq C_1
\bigl(1+\E\bigl(Y^{n}_{t}(x)\bigr)^{k_1} \bigr)
\leq C_1 \bigl(1+C\bigl(1+x^{k_1}\bigr) \bigr)
\nonumber
\\
&\le C\bigl(1+x^{{k}_1}\bigr),\quad x\ge0,\ t\in[0,T],\label{eq: P(t,x)<...}
\end{align}
where the constant $C$ depends only on $C_1$, $k_1$, $T$, and $
n$.

At this point, we need the following technical lemma, which we will
prove in the Appendix.

\begin{lm}\label{lem: g(x,a,b)} For a function $f:\R_+\to\R$, define
the function
\[
g(x;a,b) :=\frac{f ((\sqrt x+a)^2+b )-f ((\sqrt x-a)^2+b )}{\sqrt
x}, \quad x>0,\ a\in\R,\ b\in\R_+.
\]
If $f\in C^{q}_{\mathit{pol}}(\mathbb{R}_+)$ for some $q=2l+1\in\N$ $(l\in
\Nb)$, then the function $g$ is extendable to a continuous function
on $\R_+\times\R\times\R_+ $ such that $g(\cdot;a,b)\in
C^l_{\mathit{pol}}(\R_+)$ for all $a\in\R$ and $b\in\R_+$. Moreover, there
exist constants $C\ge0$ and $k\in\mathbb{N}$, depending only on a
good set $\{(C_i,k_i)$, $i=0,1,\ldots,q\}$ for~$f$, such that
\begin{equation}
\bigl\llvert \partial^j_xg(x;a,b)\bigr\rrvert \le C
\llvert a\rrvert \bigl(1+x^k+\bigl\llvert a^2+b\bigr
\rrvert ^k\bigr),\quad x\in\R_+,a\in\R,b\in\R_+,\
\end{equation}
for all $j=0,1,\dots,l$.
\end{lm}
Now consider $R(t,x)$. Applying Lemma~\ref{lem: g(x,a,b)} with $f'$
instead of $f$ (and thus with $g_1$ instead of $g$), we have
\begingroup
\abovedisplayskip=7pt
\belowdisplayskip=7pt
\begin{align}
\bigl\llvert R(t,x)\bigr\rrvert &\le\frac{1}{2}\sqrt{t}\mathbb{E}\bigl
\llvert \xi g_1\bigl(x,\xi \sqrt{t},Y^{n-1}_t
\bigr)\bigr\rrvert
\nonumber
\\
&\leq Ct\mathbb{E} \bigl[\xi^2 \bigl(1+x^{k_2}+\bigl\llvert
(\xi\sqrt {t})^2+Y^{n-1}_t\bigr
\rrvert^{k_2}\bigr) \bigr]
\nonumber
\\
&\leq Ct\mathbb{E} \bigl[\xi^2 \bigl(1+x^{k_2}+2^{k-1}
\bigl((\xi\sqrt {t})^{2k_2}+\bigl(Y^{n-1}_t
\bigr)^{k_2} \bigr) \bigr) \bigr]
\nonumber
\\
&\leq C\bigl(1+x^{k_2}
\bigr),\quad x\ge0,\ t\in[0,T],\label{eq: estimate R(t,x)}
\end{align}
where the constant $C$ clearly depends only on $C_2$, $k_2$, $T$,
and $n$.

Combining the obtained estimates, we finally get
\begin{align*}
\bigl\llvert \partial_x\mathbb{E}f\bigl(X_t(x)\bigr)
\bigr\rrvert &\leq C\bigl(1+x^{{k}_1}\bigr)+C\bigl(1+x^{k_2}\bigr)
\\
&\leq C\bigl(1+x^k\bigr),\quad x\ge0,\ t\in[0,T],
\end{align*}
where $k=\max\{k_1,k_2\}$, and the constant $C$ depends only on $
C_1, C_2$, $k_1,k_2$, $T$, and $n$.

Now consider the general case where $\delta\ge1$$,\delta\notin
\mathbb{N}$. Note that we consider the general case only for $l=1$
because the reasoning for higher-order derivatives is the same.

Let $n<\delta<n+1 $, $n\in\N$. According to \cite
[Prop.~6.2.1.1]{CirBessel}, $Y^\delta_t(x)$ has the same distribution
as the affine sum of two independent BESQ processes, namely,
\begin{align*}
Y^\delta_t(x)&\stackrel{d} {=}\lambda\widetilde{Y}^n_t(x)+
\lambda _2\widehat{Y}^{n+1}_t(x),
\end{align*}
where
$\widetilde{Y}^n_t(x)$ and $\widehat{Y}^{n+1}_t(x)$ are two
independent BESQ processes of dimensions $n$ and $n+1$,
respectively, starting at $x$, and $\lambda_1=n+1-\delta\in(0,1)$,
$\lambda_2=1-\lambda_1=\delta-n\in(0,1)$ (so that $\delta=\lambda_1
n+\lambda_2(n+1)$). Using the estimates just obtained for $\delta\in\N
$, we have
\begin{align*}
\partial_x\mathbb{E}f\bigl(Y^\delta_t(x)\bigr)
&=\partial_x\mathbb{E}f \bigl(\lambda_1\widetilde
Y^n_t(x)+\lambda _2
\widehat{Y}^{n+1}_t(x) \bigr)
\\
&= \frac{1}{2} \bigl[\partial_x\mathbb{E}f \bigl(
\lambda_1\bigl(\widetilde {Y}^+_t(x)+\widetilde{Y}^{n-1}_t
\bigr) +\lambda_2\bigl(\widehat{Y}^+_t(x)+\widehat
{Y}^{n}_t\bigr) \bigr)
\\
&\quad+\partial_x\mathbb{E}f \bigl(\lambda_1\bigl(
\widetilde {Y}^-_t(x)+\widetilde{Y}^{n-1}_t\bigr)
+\lambda_2\bigl(\widehat{Y}^-_t(x)+\widehat
{Y}^{n}_t\bigr) \bigr) \bigr],
\end{align*}
where
\begin{align*}
\widetilde{Y}^+_t(x)&\,{:=}\,(\sqrt{x}+\tilde{\xi}\sqrt{t})^2,
\qquad\! \widetilde{Y}^-_t(x)\,{:=}\,(\sqrt{x}-\tilde{\xi}
\sqrt{t})^2,\qquad\!\widehat {Y}^+_t(x)\,{:=}\,(\sqrt{x}+\hat{\xi}
\sqrt{t})^2,
\\
\widehat{Y}^-_t(x)&:=(\sqrt{x}-\hat{\xi}\sqrt{t})^2,
\qquad\widetilde {Y}^{n-1}_t:=\sum
_{i=1}^{n-1}\bigl(\widetilde{B}^i_t
\bigr)^2,\qquad\widehat {Y}^{n}_t:=\sum
_{i=1}^{n}\bigl(\widehat{B}^i_t
\bigr)^2,
\end{align*}
\endgroup
with independent standard normal variables $\tilde{\xi}$ and $\hat{\xi
}$ and standard Brownian motions $\widetilde B^i$ and $\widehat
{B}^i$. Using again the fact that the distributions of $\tilde Y^\pm
_t(x)$ and $\hat Y^\pm_t(x)$ coincide and proceeding as in~\eqref
{eq:P(t,x),R(t,x)}, we have
\begin{align*}
\partial_x\mathbb{E}f\bigl(Y^\delta_t(x)\bigr)&= \frac{1}{2} \bigl[\partial_x\mathbb{E}f \bigl(\lambda_1 \bigl((\sqrt {x}+\tilde{\xi}\sqrt{t})^2+\widetilde{Y}^{n-1}_t \bigr) +\lambda_2 \bigl((\sqrt{x}+\hat{\xi}\sqrt{t})^2+\widehat{Y}^{n}_t\bigr) \bigr)\\
&\quad +\partial_x\mathbb{E}f \bigl(\lambda_1\bigl((\sqrt{x}-\tilde {\xi}\sqrt{t})^2+\widetilde{Y}^{n-1}_t\bigr) +\lambda_2 \bigl((\sqrt {x}-\hat{\xi}\sqrt{t})^2+\widehat{Y}^{n}_t \bigr) \bigr) \bigr]\\
&= \frac{1}{2} \biggl[\mathbb{E}f' \bigl(\lambda_1 \bigl((\sqrt{x}+\tilde{\xi }\sqrt{t})^2+\widetilde{Y}^{n-1}_t \bigr) \\
&\quad +\lambda_2 \bigl((\sqrt{x}+\hat {\xi}\sqrt{t})^2+\widehat{Y}^{n}_t\bigr) \bigr) \biggl(1+(\lambda_1\tilde\xi+\lambda_2\hat\xi)\sqrt{\frac{t}x} \biggr)\\
&\quad +\mathbb{E}f' \bigl(\lambda_1 \bigl((\sqrt{x}-\tilde{\xi}\sqrt {t})^2+\widetilde{Y}^{n-1}_t\bigr)\\
&\quad +\lambda_2 \bigl((\sqrt{x}-\hat{\xi }\sqrt{t})^2+\widehat{Y}^{n}_t \bigr) \bigr) \biggl(1-(\lambda_1\tilde\xi +\lambda_2\hat\xi)\sqrt{\frac{t}x} \biggr) \biggr]\\
&=\mathbb{E}f'\bigl(Y^\delta_t(x)\bigr) +\frac{\sqrt{t}}2\E \bigl[(\lambda_1\tilde\xi+\lambda_2\hat\xi)\\
&\quad \times g_1 \bigl(x,(\lambda_1\tilde{\xi}+\lambda_2\hat{\xi})\sqrt{t},\lambda_1\lambda_2(\tilde\xi-\hat{\xi})^2t+\lambda_1\widetilde{Y}^{n-1}_t+\lambda _2\widehat{Y}^{n}_t \bigr) \bigr]\\
&=:P_1(t,x)+R_1(t,x).
\end{align*}

Combination of estimates \eqref{eq: sum a_i} and \eqref{eq:EY^p<
C(1+x^p)} leads to the estimate
\begin{align*}
\bigl\llvert P_1(t,x)\bigr\rrvert &=\bigl\llvert\mathbb{E}f'\bigl(Y^\delta_t(x)\bigr)\bigr\rrvert \leq C_1\mathbb{E}\bigl(1+\bigl\llvert Y^\delta_t(x)\bigr\rrvert^{k_1}\bigr)\\
&\leq C_1\mathbb{E} \bigl(1+4^{k_1-1}\bigl\llvert\lambda_1^{k_1} \bigl(\bigl(\widetilde {Y}^+_t(x)\bigr)^{k_1}\\
&\quad +\bigl(\widetilde{Y}^{n-1}_t\bigr)^{k_1} \bigr) +\lambda_2^{k_1} \bigl(\bigl(\widehat{Y}^+_t(x)\bigr)^{k_1}+\bigl(\widehat{Y}^{n}_t\bigr)^{k_1}\bigr)\bigr\rrvert \bigr)\\
&\leq C\bigl(1+x^{k_1}\bigr),\quad x\ge0,\ t\in[0,T],
\end{align*}
where the constant $C$ depends only on $C_1$, $k_1$, $T$, and $
n$. By Lemma~\ref{lem: g(x,a,b)}, similarly to estimate \eqref{eq:
estimate R(t,x)}, we have
\begin{align*}
\bigl\llvert R_1(t,x)\bigr\rrvert &\leq C\mathbb{E} \bigl[(\lambda_1\tilde{\xi}+\lambda_2\hat{\xi })^2\bigl(1+x^{k}\\
&\quad +\bigl\llvert (\lambda_1\tilde{\xi}+
\lambda_2\hat{\xi })^{2}t+\lambda_1
\lambda_2(\tilde\xi-\hat{\xi})^2t+\lambda_1
\widetilde {Y}^{n-1}_t+\lambda_2
\widehat{Y}^{n}_t\bigr\rrvert^k \bigr) \bigr]
\\
&\leq C\bigl(1+x^{k}\bigr),\quad x\ge0,\ t\in[0,T],
\end{align*}
where the constant $C$ depends only on $C_2$, $k_2$, $T$, and $
n$. Combining the last two estimates, we get
\begin{align}
\label{eq: dx Ef(Xt(x))} \bigl\llvert \partial_x\mathbb{E}f
\bigl(X_t(x)\bigr)\bigr\rrvert \leq C\bigl(1+x^k\bigr),
\quad x\ge0,\ t\in[0,T],
\end{align}
where $k=\max\{k_1,k_2\}$, and the constant $C$ depends only on $
C_1, C_2$, $k_1,k_2$, and $T$.

\emph{Step 2}. Let $l=2$. From \emph{Step 1} we have
\begin{align*}
\partial_x\mathbb{E}f\bigl(Y^n_t(x)\bigr)&=
\mathbb{E}f' \bigl(Y^{n}_{t}(x) \bigr)+
\frac{1}{2}\sqrt{t}\,\mathbb{E} \bigl(\xi g_1\bigl(x, \xi\sqrt
t, Y^{n-1}_{t}\bigr) \bigr).
\end{align*}
Therefore,
\begin{align*}
\partial_x^2\mathbb{E}f\bigl(Y^n_t(x)\bigr)&=\partial_x\mathbb{E}f' \bigl(Y^{n}_{t}(x)\bigr)+\frac{1}{2}\sqrt{t}\,\mathbb{E} \bigl(\xi\partial_x g_1\bigl(x, \xi\sqrt t, Y^{n-1}_{t}\bigr)\bigr)\\
&=:P_2(t,x)+R_2(t,x).
\end{align*}
From estimate \eqref{eq: dx Ef(Xt(x))} with $f$ replaced by $f'$ we obtain
\begin{align}
\label{eq: P2(t,x)} \bigl\llvert P_2(t,x)\bigr\rrvert &\leq C
\bigl(1+x^{k_3}\bigr),\quad x\ge0,\ t\in[0,T],
\end{align}
where the constant $C$ depends only on $C_1$, $C_3$, $k_1$, $
k_3$, $T$, and $n$. For $R_2(t,x)$, applying Lemma~\ref{lem:
g(x,a,b)} once more to~$g_1$ instead of $g$, we get
\begin{align*}
\bigl\llvert R_2(t,x)\bigr\rrvert &\,{\leq}\,\frac{1}{2}\sqrt{t}\mathbb{E}\bigl\llvert { \xi\partial _xg_1\bigl(x, \xi\sqrt t, Y^{n-1}_{t}\bigr)}\bigr\rrvert \,{\leq}\,Ct\mathbb{E}\bigl(\xi ^2\bigl(1+x^{k}+\llvert \xi\sqrt{t}\rrvert^{k}+\bigl(Y^{n-1}_t\bigr)^{k}\bigr) \bigr)\\
&\leq C\bigl(1+x^{k}\bigr),\quad x\ge0,\ t\in[0,T],
\end{align*}
where the constants $C$ and $k\in\N$ depend only on $\{(C_i,k_i)$,
$i=1,2,3,4\}$, $T$, and $n$.
Combining the obtained estimates, we finally get
\begin{align*}
\bigl\llvert \partial_x^2\,\mathbb{E}f
\bigl(X_t(x)\bigr)\bigr\rrvert &\leq C\bigl(1+x^k\bigr),
\quad x\ge 0,\ t\in[0,T],
\end{align*}
where the constants $C$ and $k\in\N$ depend only on $\{(C_i,k_i)$,
$i=1,2,3,4$\}, $T$, and $n$.

\emph{Step 3}. Now we may continue by induction on $l$. Suppose that
estimate \eqref{eq: nel. d/dx Ef(Xt(x)} is valid for $l=m-1$. Let us
show that it is still valid for $l=m$. The arguments are similar to
those in the case $m=2$ (Step 2). We have
\begin{align*}
\partial_x^m\,\mathbb{E}f\bigl(Y^n_t(x)
\bigr)&=\partial_x^{m-1}\mathbb{E}f'
\bigl(Y^{n}_{t}(x) \bigr) +\frac{1}{2}\sqrt{t}\,
\partial_x^{m-1}\mathbb{E} \bigl(\xi g_1\bigl(x,
\xi \sqrt t, Y^{n-1}_{t}\bigr) \bigr)\\
&=:P_m(t,x)+R_m(t,x).
\end{align*}
Then, similarly to estimates \eqref{eq: P(t,x)<...} and \eqref{eq:
P2(t,x)}, we have
\begin{align*}
\bigl\llvert P_m(t,x)\bigr\rrvert &\leq C\bigl(1+x^{k_{m}}
\bigr),
\end{align*}
where the constant $C$ depends only on $\{(C_i,k_i)$, $i=1,3,\dots
, 2m-1\}$, $T$, and $n$.

For $R_m(t,x)$, applying Lemma~\ref{lem: g(x,a,b)} to $g_1$ instead of
$g$, we get
\begin{align*}
\bigl\llvert R_m(t,x)\bigr\rrvert &\leq\frac{1}{2}\sqrt{t}
\mathbb{E}\bigl\llvert \xi\partial_x^{m-1} g_1
\bigl(x, \xi\sqrt t, Y^{n-1}_{t}\bigr)\bigr\rrvert \leq C
\bigl(1+x^{k}\bigr),
\end{align*}
where the constants $C$ and $k\in\N$ depend only on $\{(C_i,k_i)$,
$i=1,\dots,2m\}$, $T$, and~$n$. Combining the obtained
estimates, we get
\begin{align*}
\bigl\llvert \partial_x^{m}\mathbb{E}f
\bigl(X_t(x)\bigr)\bigr\rrvert &\leq C\bigl(1+x^k\bigr),
\quad x>0,\ t\in[0,T],
\end{align*}
where the constants $C$ and $k\in\N$ depend only on $\{(C_i,k_i)$,
$i=1,\dots,2m\}$, $T$, and~$n$. Thus, Theorem~\ref{pol1} is
proved for all $l\in\mathbb{N}$.

\emph{Step 4}. As in Alfonsi \cite[p.~28]{Alfonsi}, inequality~\eqref
{eq: nel. d/dx Ef(Xt(x)} for the derivatives with respect to $t$ and
mixed derivatives follows automatically by an induction on $l'$ using
that, for $l'\ge1$ such that $4l'+2l\le q$,
\begin{align*}
\partial^l_x\partial^{l'}_tu(t,x)
&=\partial^l_x \biggl(\theta(\kappa-x)
\partial_x\partial^{l'-1}_tu(t,x) +
\frac{\sigma^2}{2}x\partial^2_x\partial^{l'-1}_tu(t,x)
\biggr)
\\
&=\frac{\sigma^2}{2}x\partial^{l+2}_x\partial^{l'-1}_tu(t,x)
+ \biggl(l\frac{\sigma^2}{2}+\theta(\kappa-x) \biggr)\partial^{l+1}_x
\partial ^{l'-1}_tu(t,x)\\
&\quad  -l\theta\partial^{l}_x\partial^{l'-1}_tu(t,x).\qedhere
\end{align*}
\end{proof}

\begin{appendix}

\section{Appendix: Proof of Lemma~\ref{lem: g(x,a,b)}}
\label{g derivative}
\begin{proof}
First, let $n=5$ ($l=2$), that is, $f\in C^{5}(\R_+)$. Then,
denoting $A:=a^2+b$, for $i=0,\dots,4$, we have
\begingroup
\begin{align*}
g_i(x;a,b)&=\frac{f^{(i)}(A+x+2a\sqrt x)-f^{(i)}(A+x-2a\sqrt x)}{\sqrt
{x}}
\\
&=\frac{1}{\sqrt{x}}f^{(i)}(A+x+2a\sqrt xs)\bigg|_{s=-1}^{s=1}\\
&=\frac{1}{\sqrt{x}}\int_{-1}^1f^{(i+1)}(A+x+2a
\sqrt xs)2a\sqrt x\,ds
\\
&=2a\int_{-1}^1f^{(i+1)}(A+x+2a\sqrt xs)
\,ds,\quad x>0.
\end{align*}
From this it follows that
\[
\lim\limits
_{x\downarrow0}g_i(x;a,b)=2a\int_{-1}^1f^{(i+1)}(A)
\, ds=4af^{(i+1)}(A).
\]
In particular, every function $g_i$, $i=0,\dots,4$, is continuously
extendable to the whole half-line $\R_+=[0,\infty)$ by defining $
g_i(0;a,b):=4af^{(i+1)}(A)$.

Let, moreover, $f\in C^{5}_{\mathit{pol}}(\R_+)$ with the estimates
\begin{equation}
\label{eq:fi(x) pol growth constants} \bigl\llvert f^{(i)}(x)\bigr\rrvert \le C_i
\bigl(1+x^{k_i}\bigr),\quad x\ge0,\ i=0,1,\ldots,5,
\end{equation}
for some constants $C_i>0$ and $k_i\in\N$, $i=0,1,\ldots, 5$.

Then we have the estimate
\begin{align*}
\bigl\llvert g_i(x;a,b)\bigr\rrvert &\le2\llvert a\rrvert \int_{-1}^1\bigl\llvert f^{(i+1)}(A+x+2a\sqrt xs)\bigr\rrvert \,ds \\
&\le4\llvert a\rrvert C_{i+1} \bigl(1+\bigl(A+x+2 \llvert a\rrvert \sqrt x\bigr)^{k_{i+1}} \bigr)\\
&\le4\llvert a\rrvert C_{i+1} \bigl(1+\bigl(A+a^2+2x \bigr)^{k_{i+1}} \bigr) \\
&\le C\llvert a\rrvert \bigl(1+A^{k_{i+1}}+x^{k_{i+1}}\bigr),\quad x\ge0,
\end{align*}
\endgroup
where $C$ depends on $C_{i+1}$ and $k_{i+1}$ only.\vadjust{\eject}

Now let us concentrate ourselves on the derivatives of $g=g_0$ with
respect to $x$. We have
\begin{align}
g'_0(x;a,b)&=2a\int_{-1}^1f''(A+x+2a\sqrt xs)\, \biggl(1+\frac{as}{\sqrt x} \biggr)ds\nonumber\\
&=2a\int_{-1}^1f''(A+x+2a\sqrt xs)\,ds +\frac{2a^2}{\sqrt x}\int_{-1}^1f''(A+x+2a\sqrt xs)\,sds\nonumber\\
&=g_1(x;a,b)+\frac{2a^2}{\sqrt x}\int_{-1}^1\Biggl(f''(A+x)\nonumber\\
&\quad +\int_0^sf'''(A+x+2a\sqrt xu)\,2a\sqrt x\,du \Biggr)s\,ds\nonumber\\
&=g_1(x;a,b)+\frac{2a^2f''(A+x)}{\sqrt x}\int_{-1}^1s\,ds\nonumber\\
&\quad  +4a^3\int_{-1}^1\int_0^sf'''(A+x+2a\sqrt xu)\,\,du\,s\,ds\nonumber\\
&=g_1(x;a,b)+4a^3\int_{-1}^1\int_0^sf'''(A+x+2a\sqrt xu)\,\,du\,s\, ds,\quad x>0.\label{eq: dg_0/dx}
\end{align}

(Note that the term at the negative power of $x$, that is, at $
1/\sqrt x$, vanishes since
$\int_{-1}^1s\,ds=0$.)
From this it follows that there exists the limit
\begin{align*}
\lim_{x\downarrow0}g'_0(x;a,b) &=\lim
_{x\downarrow0} g_1(x;a,b) +4a^3\int
_{-1}^1 \int_{0}^sf'''(A)
\,du\,s\,ds
\\
&=4af''(A)+\frac{8a^3}{3}f'''(A).
\end{align*}
In particular, the function $g=g_0$ is continuously differentiable at
$x=0$ and thus belongs to $C^1(\R_+)$ since $g_0'(0;a,b)=\lim_{x\downarrow0}g'_0(x;a,b)$ by the Lagrange theorem.

If, moreover, $f\in C^{5}_{\mathit{pol}}(\R_+)$ satisfies estimates
\eqref{eq: polinom. f isv. augimas iki n}\ for $i\le5$, then we have
the corresponding estimate
for $g'_0$:
\begin{align}
\bigl\llvert g'_0(x;a,b)\bigr\rrvert &\le\bigl\llvert
g_1(x;a,b)\bigr\rrvert +4\llvert a\rrvert^3\int
_{-1}^1 \int_{-s}^s
\bigl\llvert f'''(A+x+2a\sqrt xu)\bigr
\rrvert \, du\,\llvert s\rrvert \,ds
\nonumber
\\
&\le C_2\llvert a\rrvert \bigl(1+A^{k_{2}}+x^{k_{2}}
\bigr) \nonumber\\
&\quad +4\llvert a\rrvert^3\int_{0}^1
\int_{-s}^sC_3\bigl(1+\bigl(A+x+2
\llvert a\rrvert \sqrt xu\bigr)^{k_3}\bigr)\,du\, ds
\nonumber
\\
&\le C_2\llvert a\rrvert \bigl(1+A^{k_{2}}+x^{k_{2}}
\bigr)+4\llvert a\rrvert ^3C_3\bigl(1+A^{k_{3}}+x^{k_{3}}
\bigr)
\nonumber
\\
&\le C\llvert a\rrvert \bigl(1+A^{k}+x^{k} \bigr),\quad x
\ge\xch{0,}{0.}\label{eq:g'0 pol. growth}
\end{align}
where $C$ and $k$ depend on $C_{2,3}$, $k_{2,3}$, and $
A=a^2+b$ only.

Thus, we have proved that $g=g_0\in C_{\mathit{pol}}^1(\R_+)$, provided that $
f\in C_{\mathit{pol}}^5(\R_+).$ (In fact, for estimate \eqref{eq:g'0 pol.
growth}, it suffices that $f\in C_{\mathit{pol}}^3(\R_+).$) More precisely, if
\[
\bigl\llvert f^{(i)}(x)\bigr\rrvert \le C_i
\bigl(1+x^{k_i}\bigr),\quad x\ge0,\ i=1,2,3,
\]
then
\[
\bigl\llvert g^{(j)}_0(x;a,b)\bigr\rrvert \le C
\bigl(1+A^k+x^k\bigr),\quad x\ge0,\ j=0,1,
\]
where the constants $C>0$ and $k\in\N$ depend only on $C_i$ and $
k_i$, $i=1,2,3$, and, in particular, on a good set of the function $
f\in C^{5}_{\mathit{pol}}(\R_+)$.

Now, let us proceed to the second derivative of $g_0$. From Eq.~\eqref
{eq: dg_0/dx} we have
\begin{align*}
g_0''(x;a,b)&=g_1'(x;a,b)+4a^3\int_{-1}^1\int_0^sf^{(4)}(A+x+2a\sqrt xu) \biggl(1+\frac{au}{\sqrt x} \biggr)\,du\,s\,ds\\
&= \Biggl(2a\int_{-1}^1f^{(2)}(A+x+2a\sqrt{x}s)\,ds \Biggr)'\\
&\quad +4a^3\int_{-1}^1\int_0^sf^{(4)}(A+x+2a\sqrt xu)\,du\,s\,ds\\
&\quad+\frac{4a^4}{\sqrt{x}}\int_{-1}^1\int_0^sf^{(4)}(A+x+2a\sqrt xu)u\,du\,s\,ds\\
&=2a\int_{-1}^1 f^{(3)}(A+x+2a\sqrt{x}s) \biggl(1+\frac{as}{\sqrt{x}} \biggr)\,ds\\
&\quad +4a^3\int_{-1}^1\int_0^sf^{(4)}(A+x+2a\sqrt xu)\,du\,s\, ds\\
&\quad+\frac{4a^4}{\sqrt{x}}\int_{-1}^1\int_0^s \Biggl[f^{(4)}(A+x)\\
&\quad +\int_0^uf^{(5)}(A+x+2a\sqrt xv)2a\sqrt{x}\,dv \Biggr]u\,du\, s\,ds\\
&=2a\int_{-1}^1 f^{(3)}(A+x+2a\sqrt{x}s)\,ds\\
&\quad +\frac{2a^2}{\sqrt{x}}\int_{-1}^1\Biggl[f^{(3)}(A+x)+\int_0^sf^{(4)}(A+x+2a\sqrt xu)2a\sqrt {x}\,du \Biggr]\,s\,ds\\
&\quad+4a^3\int_{-1}^1\int_0^sf^{(4)}(A+x+2a\sqrt xu)\,du\,s\,ds\\
&\quad +\frac{4a^4}{\sqrt{x}}f^{(4)}(A)\int_{-1}^1\int_0^s u\,du\,s\,ds\\
&\quad+8a^5\int_{-1}^1\int_0^s\int_0^uf^{(5)}(A+x+2a\sqrt xv)\, dv\,u\,du\,s\,ds\\
&=2a\int_{-1}^1 f^{(3)}(A+x+2a\sqrt{x}s)\,ds\\
&\quad +4a^3\int_{-1}^1\int_0^sf^{(4)}(A+x+2a\sqrt xu)\,du\,s\,ds\\
&\quad+4a^3\int_{-1}^1\int_0^sf^{(4)}(A+x+2a\sqrt xu)\,du\,s\,ds\\
&\quad+8a^5\int_{-1}^1\int_0^s\int_0^uf^{(5)}(A+x+2a\sqrt xv)\, dv\,u\,du\,s\,ds,\quad x>0.
\end{align*}
(Note that, again, the term at the negative power of $x$, that is, at
$1/\sqrt x$, vanishes since $\int_{-1}^1\int_0^s u\,du\,s\,ds=0$.)
In~particular, again by the Lagrange theorem, $g_0$ is twice
continuously differentiable on the whole half-line $\R_+$ since there
exists the finite limit
\begin{align*}
\lim_{x\downarrow0}g''_0(x;a,b)&=
\lim_{x\downarrow
0}g_1'(x;a,b)+4a^3f^{(4)}(A)
\int_{-1}^1\int_0^s
\,du\,s\,ds
\\
&\qquad+8a^5f^{(5)}(A)\int_{-1}^1
\int_0^s\int_0^u
\,dv\,u\,du\,s\,ds
\\
&=4af^{(3)}(A)+\frac{16a^3f^{(4)}(A)}{3}+\frac{16a^5f^{(5)}(A)}{15}.
\end{align*}
If, moreover, $f\in C^{5}_{\mathit{pol}}(\R_+)$ satisfies estimates
\eqref{eq:fi(x) pol growth constants}, then we have the corresponding estimate
for~$g''_0$:
\begin{align}
\bigl\llvert g_0''(x,a,b)\bigr\rrvert & \leq2\llvert a\rrvert \int_{-1}^1\bigl\llvert f^{(3)}(A+x+2a\sqrt{x}s)\bigr\rrvert \, ds\nonumber\\
&\quad +8\llvert a\rrvert^3\int_{-1}^1\int_{-s}^s\bigl\llvert f^{(4)}(A+x+2a\sqrt{x}u)\bigr\rrvert \,du\,\llvert s\rrvert \, ds\nonumber\\
&\quad +8\llvert a\rrvert ^5\int_{-1}^1\int_{-s}^s\int_{-u}^u\bigl\llvert f^{(5)}(A+x+2a\sqrt {x}v)\bigr\rrvert \,dv\,\llvert u\rrvert\,du\,\llvert s\rrvert \,ds\nonumber\\
&\leq2\llvert a\rrvert \int_{-1}^1C_3\bigl(1+\bigl(A+x+2\llvert a\rrvert \sqrt{x}s\bigr)^{k_3}\bigr)\,ds\nonumber\\
&\quad +8\llvert a\rrvert ^3\int_{0}^1\int_{-s}^sC_4\bigl(1+\bigl(A+x+2\llvert a\rrvert \sqrt{x}u\bigr)^{k_4}\bigr)\,du\,ds\nonumber\\
&\quad +8\llvert a\rrvert ^5\int_{0}^1\int_{-s}^s\int_{-u}^uC_5\bigl(1+\bigl(A+x+2\llvert a\rrvert \sqrt {x}v\bigr)^{k_5}\bigr)\,dv\,du\,ds\nonumber\\
&\leq C\llvert a\rrvert \bigl(1+A^k+x^k\bigr),\quad x \ge0,\label{eq:g''0 pol. growth}
\end{align}
where the constants $C>0$ and $k\in\N$ depend only on $C_i$ and $
k_i$, $i=3$,4,5, and, in particular, on a good set of the function $
f\in C^{5}_{\mathit{pol}}(\R_+)$.

Now, for $l>2$, we can proceed similarly. For $f\in C^{2l+1}_{\mathit{pol}}(\R
_+)$, denote
\begin{align*}
F^{0,q}&=F^{0,q}(x,a,b):=\int_{-1}^1f^{(q)}(A+x+2a\sqrt{x}s)\,ds,\\
F^{p,q}&=F^{p,q}(x,a,b)\\
&:=\int_{-1}^1\int_{0}^{s_1}\dots\int_{0}^{s_p}f^{(q)}(A+x+2a\sqrt{x}s_{p+1})\,ds_{p+1}\ldots\,s_2\,ds_2\, s_1\,ds_1,
\\
&\qquad p=1,\dots,l,\,q=l+1,\ldots,2l+1.
\end{align*}
\newcommand{\dotrule}[1]{%
\parbox[t]{#1}{\dotfill}}
Then, in addition to the first two derivatives
\begin{align}
&g'_0(x,a,b)=a\bigl(2F^{0,2}+4a^2F^{1,3}\bigr)\quad\mbox{and}\nonumber\\
&g''_0(x,a,b)=a\bigl(2F^{0,3}+8a^2F^{1,4}+8a^4F^{2,5}\bigr),\qquad\qquad\qquad\nonumber
\end{align}
we get:
\begin{align}
&g'''_0(x,a,b)=a\bigl(2F^{0,4}+12a^2F^{1,5}+24a^4F^{2,6}+16a^6F^{3,7}\bigr),\nonumber\\
&\ldots\ldots\ldots\ldots\ldots\ldots\ldots\ldots\ldots\ldots\nonumber\\
&g_0^{(l)}(x,a,b)=a\sum_{j=0}^lc_{j,l}a^{2j}F^{j,l+j+1}(x;a,b), \label{eq:g, lth derivative}
\end{align}
where $c_{j,l}$, $0\le j\le l$, are some constants. Note that, as
before, in the right-hand side of Eq.~\eqref{eq:g, lth derivative},
there are no negative powers of $x$, so that $g_0$ is $l$ times
continuously differentiable on the whole half-line $\R_+$, provided
that $f\in C^{2l+1}_{\mathit{pol}}(\R_+)$.
Moreover, as before, from \eqref{eq:g, lth derivative} we get the
following estimates for $g_0^{(r)}$:
\begin{align*}
\bigl\llvert g_0^{(r)}(x,a,b)\bigr\rrvert \leq C\llvert a
\rrvert \bigl(1+A^k+x^k\bigr), \quad x\geq0,\ r=0,1,
\dots,l,
\end{align*}
where the constants $C>0$ and $k\in\N$ depend only on $C_i$ and $
k_i$, $i=0,\dots,2l+1$, that is, only on a good set of the function
$f\in C^{2l+1}_{\mathit{pol}}(\R_+)$.
\end{proof}
\end{appendix}

\begin{acknowledgement}
We thank the anonymous reviewer for comments that enabled us to improve
the presentation of the paper.
\end{acknowledgement}


\end{document}